\providecommand{\tabularnewline}{\\}
  \theoremstyle{plain}
  \newtheorem*{thm*}{Theorem}
\theoremstyle{plain}
\newtheorem{thm}{Theorem}
  \theoremstyle{remark}
  \newtheorem*{rem*}{Remark}
  \theoremstyle{plain}
  \newtheorem{algorithm}[thm]{Algorithm}
  \theoremstyle{plain}
  \newtheorem{lem}[thm]{Lemma}
  \theoremstyle{remark}
  \newtheorem*{acknowledgement*}{Acknowledgement}
\begin{document}

\title{Reconstruction of Bandlimited Functions from Unsigned Samples}

\author{Gaurav Thakur%
\thanks{Program in Applied and Computational Mathematics, Princeton University,
Princeton, NJ 08544, USA, email: gthakur@princeton.edu%
}}

\date{June 30, 2010}
\maketitle
\begin{abstract}
We consider the recovery of real-valued bandlimited functions from
the absolute values of their samples, possibly spaced nonuniformly.
We show that such a reconstruction is always possible if the function
is sampled at more than twice its Nyquist rate, and may not necessarily
be possible if the samples are taken at less than twice the Nyquist
rate. In the case of uniform samples, we also describe an FFT-based
algorithm to perform the reconstruction. We prove that it converges
exponentially rapidly in the number of samples used and examine its
numerical behavior on some test cases.
\end{abstract}
\noindent {\small Keywords: Sampling theorem, Nonuniform sampling,
Entire functions of exponential type, Canonical products, Fast Fourier
Transform}\\
{\small \par}

\noindent {\small Mathematics Subject Classification (2010): Primary
94A20; Secondary 30D15, 42C15, 94A12}{\small \par}

\section{Introduction\label{SecIntro}}

In a series of recent papers \cite{BBCE09,BCE06,BCE07}, Balan, Casazza
and Edidin have investigated the possibility of reconstructing finite-dimensional
signals using measurements that do not contain sign or phase information.
Motivated by an application in the denoising of speech signals, they
studied \textit{$M$-element frames} for $\mathbb{R}^{n}$, i.e. collections
of $M$ vectors that span $\mathbb{R}^{n}$. They considered $M$-element
frames, $\{f_{k}\}_{1\leq k\leq M}$, $f_{k}\in\mathbb{R}^{n}$, such
that any vector $x\in\mathbb{R}^{n}$ can be uniquely determined from
the inner products $\left\{ \left|\left\langle f_{k},x\right\rangle \right|\right\} _{1\leq k\leq M}$
up to an ambiguity of a sign factor. Using frame theory and combinatorial
methods, they showed that such frames exist if and only if $M\geq2n+1$.
In \cite{BBCE09}, a computational method to carry out this reconstruction
was described in the case where $M\geq\tfrac{n(n+1)}{2}$, using a
special class of frames.\\

It is natural to ask if there are analogous results for continuous-domain
signals, namely in the context of samples of bandlimited functions.
The well-known Whittaker-Shannon-Kotelnikov (WSK) sampling theorem
\cite{Br04} shows that if a bandlimited function $f$ is sampled
at a rate greater than its Nyquist frequency, then it can be uniquely
reconstructed from the samples. However, if the signs of the samples
are unknown, this condition may no longer be sufficient. An example
illustrating this is given by the functions $\sin\left(\pi\left(z+\frac{1}{4}\right)\right)$
and $\cos\left(\pi\left(z+\frac{1}{4}\right)\right)$, which agree
in absolute value at $z=\frac{k}{2}$, $k\in\mathbb{Z}$. On the other
hand, it might be expected that if we oversample $f$ at a sufficiently
high rate, the samples may contain enough redundancy that can we afford
to lose their signs and still recover $f$ up to a global sign factor.
It turns out that this is indeed the case.\\

In this paper, we use a complex variable approach to show that if
a real-valued bandlimited function $f$ is sampled at more than twice
its Nyquist rate, then $f$ can be uniquely determined from the absolute
values of its samples up to a sign factor. Conversely, we find that
if $f$ is sampled at less than twice its Nyquist rate, then it is
not always possible to uniquely determine it in this way. We present
an algorithm to perform this reconstruction, and show that it converges
exponentially rapidly in the number of samples used. We consider a
fairly general class of nonuniformly spaced samples in this paper,
although our numerical approach is developed with uniformly spaced
samples in mind for reasons of computational efficiency.\\

We review some existing theory on nonuniform sampling and bandlimited
functions in Section \ref{SecBG}, and then state and prove our main
theoretical results in Section \ref{SecMain}. We describe our algorithm
and study its convergence properties in Section \ref{SecAlgo}, and
apply it to two test cases in Section \ref{SecNum}.

\section{Background Material\label{SecBG}}

We normalize the Fourier transform as $\hat{f}(\omega)=\int_{-\infty}^{\infty}f(t)e^{-2\pi i\omega t}dt$
for Schwartz functions $f$ and extend it to tempered distributions
in the usual way. For $0<p\leq\infty$, we define the \textit{Paley-Wiener
spaces} of bandlimited functions \cite{Se04} by

\noindent \[
PW_{b}^{p}=\left\{ f\in L^{p}:\mathrm{supp}(\hat{f})\subset[-\tfrac{b}{2},\tfrac{b}{2}]\right\} .\]

\noindent An entire function $g$ is said to be of \textit{exponential
type $b$} if

\noindent \[
b=\inf\left(\beta:|g(z)|\leq e^{\beta|z|},z\in\mathbb{C}\right),\]
and we denote this by writing $\mathrm{type}(g)=b$. By the Paley-Wiener-Schwartz
theorem \cite{Ho03}, $PW_{b}^{p}$ can be equivalently described
as the space of all entire functions $f$ with $\mathrm{type}(f)\leq\pi b$
whose restrictions to $\mathbb{R}$ are in $L^{p}$. It also follows
that $PW_{b}^{p}\subset PW_{b}^{q}$ for $p<q$. Functions $f\in PW_{b}^{p}$
satisfy the classical estimates $\left\Vert f'\right\Vert _{L^{p}}\leq\pi b\left\Vert f\right\Vert _{L^{p}}$
and $\left\Vert f(\cdot+ic)\right\Vert _{L^{p}}\leq e^{\pi b|c|}\left\Vert f\right\Vert _{L^{p}}$,
respectively known as the \textit{Bernstein} and \textit{Plancherel-Polya
inequalities} \cite{Le96,Se04}.\\

We now consider a sequence of points $X=\{x_{k}\}\subset\mathbb{R}$,
indexed so that $x_{k}<x_{k+1}$. For any set $B$, we denote the
number of $x_{k}$ in $B$ by $N(X,B)$. We say that $X$ is \textit{separated}
if $\inf_{k}|x_{k+1}-x_{k}|>0$. Following \cite{DS52}, $X$ is also
said to be \textit{uniformly dense} if it satisfies\begin{equation}
\sup_{k}\left|x_{k}-\frac{k}{d}\right|<\infty\label{Density}\end{equation}

\noindent for some finite $d>0$. We denote this by writing $D(X)=d$,
and by $D(X)=\infty$ if $X$ is not uniformly dense. We will mainly
deal with separated, uniformly dense sequences in this paper. It is
worth mentioning that $D(X)$ is not directly related to Beurling's
upper and lower densities (see \cite{Se04}), and there are sequences
$X$ with finite Beurling densities but for which $D(X)=\infty$.\\

\noindent The \textit{generating function }of a sequence $X\subset\mathbb{R}$
is given by\begin{equation}
S(z)=z^{\delta_{X}}\lim_{r\rightarrow\infty}\prod_{0<|x_{k}|<r}\left(1-\dfrac{z}{x_{k}}\right),\label{GenFunc}\end{equation}
where $\delta_{X}=1$ if $0\in X$ and $\delta_{X}=0$ otherwise.
For a uniform sequence $x_{k}=\tfrac{k}{b},$ $S(z)=\tfrac{\sin(\pi bz)}{\pi b}$.
If $X$ is separated and $D(X)=b<\infty$, then the limit in (\ref{GenFunc})
is finite and the function $S$ lies in the \textit{Cartwright class}
$\, CW_{b}$ \cite{Ko96}, the set of all entire functions $f$ with
$\mathrm{type}(f)\leq\pi b$ that satisfy the growth condition\\

\noindent \[
\int_{-\infty}^{\infty}\dfrac{\max(\log|f(t)|,0)}{t^{2}+1}dt<\infty.\]

\noindent In fact, functions in $CW_{b}$ satisfy the apparently stronger
condition \cite{Ko98}\begin{equation}
\int_{-\infty}^{\infty}\dfrac{\left|\log|f(t)|\right|}{t^{2}+1}dt<\infty.\label{LogInt}\end{equation}

\noindent The following result shows that an arbitrary function in
$CW_{b}$ can be expanded in the form (\ref{GenFunc}) and gives a
useful geometric description of its zeros. \cite{Le96}
\begin{thm*}
\noindent (Cartwright-Levinson) Let $W^{+}(\theta,r)$, $W^{-}(\theta,r)$
and $W'(\theta,r)$ respectively be the wedges $\{z:|z|<r,|\arg z|\leq\theta\}$,
$\{z:|z|<r,|\pi-\arg z|\leq\theta\}$ and $\{z:|z|<r,|\arg z|>\theta,|\pi-\arg z|>\theta\}$.
Suppose \textup{$f\in CW_{b}$, $f\not\equiv0$,} and let $U=\{u_{k}\}$
be the set of its zeros. Define $b'=\frac{1}{\pi}\inf(\mathrm{type}(e^{i\omega z}f(z)),\omega\in\mathbb{R})$.\\

\noindent 1: For any $\theta\in(0,\tfrac{\pi}{2})$, as $r\rightarrow\infty$,\[
\dfrac{N(U,W^{+}(\theta,r))}{r}\rightarrow\dfrac{b'}{2},\,\dfrac{N(U,W^{-}(\theta,r))}{r}\rightarrow\dfrac{b'}{2},\textnormal{\,\ and\,\,}\dfrac{N(U,W'(\theta,r))}{r}\rightarrow0.\]
\\
2: $U$ satisfies \textup{$\sum_{|u_{k}|>0}|\mathrm{Im}(\frac{1}{u_{k}})|<\infty.$}
\\

\noindent 3:\begin{equation}
f(z)=f(0)e^{iqz}z^{\delta_{U}}\lim_{r\rightarrow\infty}\prod_{0<|u_{k}|<r}\left(1-\dfrac{z}{u_{k}}\right),\label{CanonProd}\end{equation}

\noindent for some constant $q$.
\end{thm*}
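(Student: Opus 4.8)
The plan is to route everything through the Phragm\'en--Lindel\"of indicator $h_f(\theta)=\limsup_{r\rightarrow\infty}r^{-1}\log|f(re^{i\theta})|$, together with the structural fact that the Cartwright growth condition (\ref{LogInt}) forces $f$ to be of bounded type (Nevanlinna class) in each of the open half-planes $\{z:\mathrm{Im}\,z>0\}$ and $\{z:\mathrm{Im}\,z<0\}$. The first step is to pin down the shape of the indicator: combining the convergence of the logarithmic integral with the trigonometric convexity that every indicator of an exponential-type function enjoys, I would show that $h_f$ is linear in $\sin\theta$ on the upper and lower half-planes separately, so that $h_f(\theta)=h_f(\tfrac{\pi}{2})\sin\theta$ for $\theta\in[0,\pi]$ and $h_f(\theta)=h_f(-\tfrac{\pi}{2})|\sin\theta|$ for $\theta\in[-\pi,0]$. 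In particular $h_f(0)=h_f(\pi)=0$, so the indicator diagram is a vertical segment on the imaginary axis. Unwinding the definition of $b'$ then shows that multiplying by $e^{i\omega z}$ slides this segment along the axis and that minimizing the type re-centers it, giving $\pi b'=\tfrac{1}{2}\bigl(h_f(\tfrac{\pi}{2})+h_f(-\tfrac{\pi}{2})\bigr)$.

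For Part 2, I would invoke the bounded-type factorization of $f$ in the upper half-plane. A function of exponential type whose restriction to $\mathbb{R}$ has a convergent logarithmic integral lies in the Nevanlinna class of the half-plane, so its zeros with $\mathrm{Im}\,u_k>0$ satisfy the half-plane Blaschke condition $\sum\mathrm{Im}(u_k)/(1+|u_k|^2)<\infty$. Since only finitely many $u_k$ lie near the origin, this is equivalent to $\sum_{\mathrm{Im}\,u_k>0}\mathrm{Im}(u_k)/|u_k|^2<\infty$, and the identity $\mathrm{Im}(1/u_k)=-\mathrm{Im}(u_k)/|u_k|^2$ converts it directly into the claimed summability. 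Running the same argument in the lower half-plane and noting that real zeros contribute nothing yields $\sum_{|u_k|>0}|\mathrm{Im}(1/u_k)|<\infty$.

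The density statement in Part 1 is where I expect the main obstacle. Part 2 already forces the zeros to cluster toward the real axis, which should make the count in the off-axis wedge $W'(\theta,r)$ grow like $o(r)$; the delicate part is the sharp asymptotic in the half-wedges $W^{\pm}$. I would apply a Levinson/Carleman-type formula expressing $N(U,\cdot)$ over a half-disk as a boundary integral of $\log|f|$ plus a term governed by the indicator, and then extract the leading behavior. The upshot should be that the number of $u_k$ with $|u_k|<r$ and $\mathrm{Re}\,u_k>0$ grows like $\tfrac{r}{2\pi}\bigl(h_f(\tfrac{\pi}{2})+h_f(-\tfrac{\pi}{2})\bigr)=\tfrac{b'}{2}r$ by the indicator computation above, and symmetrically for $\mathrm{Re}\,u_k<0$. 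Passing from half-planes to the wedges only discards the $o(r)$ off-axis zeros, so the three stated limits follow. Controlling the error terms in this count uniformly as $r\rightarrow\infty$ is the technical crux of the whole theorem.

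Finally, for Part 3, Parts 1 and 2 together guarantee that the symmetric (principal-value) product $P(z)=z^{\delta_U}\lim_{r\rightarrow\infty}\prod_{0<|u_k|<r}(1-z/u_k)$ converges to an entire function of exponential type $\pi b'$ sharing exactly the zeros of $f$ with their multiplicities. Hence $f/P$ is entire and zero-free, so $f/P=e^{g(z)}$ for some entire $g$; comparing exponential types and using the bounded-type structure in both half-planes forces $\mathrm{Re}\,g$ to grow at most linearly and to be log-integrable on $\mathbb{R}$, which makes $g$ affine of the form $g(z)=iqz+c$ with $q$ real and $e^{c}=f(0)$ (interpreting $f(0)$ as the appropriate leading coefficient when $0\in U$). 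This gives the representation (\ref{CanonProd}).
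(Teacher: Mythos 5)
The paper does not prove this theorem: it is quoted as classical background with a citation to Levin [Le96], so there is no internal argument to measure your proposal against. Your outline follows the standard textbook route --- trigonometric convexity of the Phragm\'en--Lindel\"of indicator together with the logarithmic integral to force the indicator diagram onto the imaginary axis, Krein's theorem that Cartwright-class functions are of bounded type in each half-plane to obtain the Blaschke condition of Part 2, and a zero-free-quotient/Hadamard comparison for Part 3 --- and those pieces are essentially sound, including your correct observation that the prefactor $f(0)$ must be read as a leading Taylor coefficient when $0\in U$. The genuine gap is Part 1: the asymptotics $N(U,W^{\pm}(\theta,r))/r\rightarrow b'/2$ \emph{are} Levinson's density theorem, which is the entire substance of the result, and your sketch reduces it to ``apply a Carleman/Levinson-type formula and control the error terms,'' which you yourself identify as the technical crux without carrying it out. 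As written, the hardest part of the statement is assumed rather than proved.

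There is also a concrete factor-of-two inconsistency that you should not pass over. You correctly derive $\pi b'=\tfrac{1}{2}\bigl(h_f(\tfrac{\pi}{2})+h_f(-\tfrac{\pi}{2})\bigr)$ from the definition of $b'$, and you correctly quote the classical half-plane count $\tfrac{r}{2\pi}\bigl(h_f(\tfrac{\pi}{2})+h_f(-\tfrac{\pi}{2})\bigr)$; but by your own identity that quantity equals $b'r$, not $\tfrac{b'}{2}r$. The example $f(z)=\sin(\pi z)$ makes the clash explicit: $\inf_{\omega}\mathrm{type}(e^{i\omega z}f)=\pi$ gives $b'=1$, so the statement as printed predicts density $\tfrac{1}{2}$ in each half-disk, yet the integers have density $1$ there. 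Hence the classical density formula you invoke and the statement in the normalization printed here cannot both hold; your chain of equalities silently absorbs the discrepancy instead of resolving it. Any complete proof has to confront this normalization (the printed definition of $b'$ appears to be the culprit), especially since the same factor propagates to the paper's subsequent remark that the real zeros of a real-valued $f\in CW_{b}$ have density at most $b$ on $[-r,r]$.
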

\noindent If $f$ is real-valued on $\mathbb{R}$, then $b'=b$ and
$q=0$, so in particular, the sequence of real zeros $V\subset U$
of $f$ satisfies $\lim_{r\to\infty}\frac{N(V,[-r,r])}{r}\leq b$.
The expression on the right side of (\ref{CanonProd}) is called a
\textit{canonical product}.\\

\noindent We will also need a second, deeper theorem on $CW_{b}$.
\begin{thm*}
\noindent (Beurling-Malliavin) Let \textup{$f\in CW_{b}$}. Then for
any $\epsilon>0$, there exists $h\in CW_{\epsilon}$, $h\not\equiv0$,
such that $fh\in L^{\infty}$.
\end{thm*}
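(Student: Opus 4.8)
The plan is to recast the claim as a minorant-construction problem and to build $h$ so that, on the real line, it decays at least as fast as $|f|$ grows. Put $\omega(t)=\log^{+}|f(t)|=\max(\log|f(t)|,0)\ge 0$. Finding a nonzero $h\in CW_\epsilon$ with $fh\in L^\infty$ is then equivalent to finding a nonzero $h\in CW_\epsilon$ for which $\log|h(t)|\le -\omega(t)+C$ for some constant $C$ and all real $t$: given such an $h$ one has $|f(t)h(t)|\le e^{C}|f(t)|/\max(1,|f(t)|)\le e^{C}$. The one ingredient that comes for free is regularity of $\omega$ in the averaged sense: since $\omega\le|\log|f||$, the integrability bound (\ref{LogInt}) gives $\int_{-\infty}^{\infty}\omega(t)/(t^{2}+1)\,dt<\infty$. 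Thus the whole statement reduces to the assertion that every nonnegative weight of finite logarithmic integral is dominated, up to an additive constant, by $-\log|h|$ for some Cartwright-class $h$ of type as small as we please.

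The construction of $h$ proceeds by the conjugate-function method together with a canonical product. The finiteness of the logarithmic integral is exactly the condition needed to form the Poisson extension $U$ of $-\omega$ to a half plane and its harmonic conjugate $\tilde U$; the analytic function $e^{U+i\tilde U}$ then has modulus $e^{-\omega}$ on $\mathbb R$, and a direct estimate gives $U(iy)/y=-\tfrac{1}{\pi}\int\omega(t)/(t^{2}+y^{2})\,dt\to 0$, so the vertical growth is subexponential. To turn this into a genuine entire function of the desired class I would realize the required boundary decay through the zeros of $h$: one writes $h$ as a canonical product (\ref{CanonProd}) over a symmetric zero set clustered near the places where $\omega$ is large, and then uses the Cartwright--Levinson theorem to certify that $h\in CW_\epsilon$ by checking that its zeros have the wedge densities $b'/2$ demanded there with $b'\le\epsilon$.

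The hard part, and the reason this is the deeper of the two theorems, is controlling the exponential type down to $\pi\epsilon$. The weight $\omega=\log^{+}|f|$ is small on average but is by no means uniformly small: because $f$ has type $\pi b$, $\log|f|$ can rise to size $\sim\pi b|t|$ on a sparse set of spikes, and near each spike $h$ must decay sharply, which costs zeros. The naive conjugate-function construction reflects this by producing a function whose type tracks the local growth of $\omega$ rather than the global density of its spikes, and so gives type comparable to $b$, not $\epsilon$. The genuine obstacle is therefore to regularize: one must replace $\omega$ by an admissible majorant $W\ge\omega$ that still has finite logarithmic integral but whose spike mass can be absorbed by a zero set of total density below $\epsilon$. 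Concretely I would partition $\mathbb R$ into intervals on which $\int\omega$ is comparable to the interval length times a small parameter, place in each interval a number of zeros of $h$ proportional to that accumulated mass, and verify by a counting argument, of precisely the type governing the Cartwright--Levinson wedge asymptotics, that the resulting density stays below $\epsilon$ while $-\log|h|\ge\omega-C$ still holds. Proving that such an $\epsilon$-thin regularization exists for \emph{every} weight of finite logarithmic integral is the crux of the Beurling--Malliavin argument; granting it, the assembly of $h$ and the verification of the requirements defining $CW_\epsilon$ are routine.
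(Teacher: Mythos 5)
The paper does not prove this statement at all: it is the classical Beurling--Malliavin multiplier theorem (1962), quoted as a known deep result with \cite{HMN06} cited for discussion. So the real question is whether your proposal stands on its own, and it does not. The decisive step --- that every relevant weight admits an ``$\epsilon$-thin regularization'' whose mass can be absorbed by a zero set of density below $\epsilon$ --- is exactly the content of the theorem, and you explicitly grant it rather than prove it (``granting it, the assembly \dots [is] routine''). An outline that defers the crux is not a proof; everything you do establish (the equivalence $fh\in L^{\infty}\Leftrightarrow\log|h|\le-\omega+C$ with $\omega=\log^{+}|f|$, the finiteness of $\int\omega(t)(1+t^{2})^{-1}dt$ via (\ref{LogInt}), the vanishing of $U(iy)/y$) is the easy preamble.

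There is also a substantive error in the reduction itself. You claim the theorem ``reduces to the assertion that every nonnegative weight of finite logarithmic integral is dominated, up to an additive constant, by $-\log|h|$ for some Cartwright-class $h$ of type as small as we please.'' That assertion is strictly stronger than Beurling--Malliavin and is false: finiteness of the logarithmic integral is necessary but not sufficient for a weight to admit small-type multipliers, as the admissible-majorant counterexamples in Koosis and in Havin--Mashreghi--Nazarov show. The theorem survives only because $\omega=\log^{+}|f|$ inherits regularity from $f$ being of exponential type --- e.g.\ Bernstein's inequality lets one replace $|f|$ by a log-Lipschitz majorant of the same type, and it is this oscillation control, not the logarithmic integral alone, that makes the $\epsilon$-thin zero placement possible. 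By discarding that structure at the outset, your plan sets out to prove a false statement, so even filling in the deferred step along the proposed lines would fail.
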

\noindent A discussion of this result and its significance can be
found in \cite{HMN06}. We can actually choose the above $h$ so that
$fh\in L^{p}$ for any $p>0$, by replacing $h$ with $h\phi$ where
$\phi$ is a function for which $\hat{\phi}$ is smooth and has sufficiently
small support. By considering the function $h(z)\overline{h(\overline{z})}$,
$h$ can also be taken to be real and nonnegative.\\

For $1\leq p\leq\infty$, $X$ is called a \textit{sampling sequence}
for $PW_{b}^{p}$ if there are constants $\alpha_{p,b}$ and $\beta_{p,b}$
such that for all $f\in PW_{b}^{p}$, $\alpha_{p,b}||f(X)||_{l^{p}}\leq\left\Vert f\right\Vert _{L^{p}}\leq\beta_{p,b}\left\Vert f(X)\right\Vert _{l^{p}}$.
In particular, this condition implies that any $f\in PW_{b}^{p}$
is uniquely determined by its samples at $X$. Precise geometric characterizations
of sampling sequences can be very complicated (see \cite{Br04,Se97,Se04}),
but for real, separated and uniformly dense sequences $X$, it is
necessary that $D(X)\geq b$ and sufficient that $D(X)>b$ for $X$
to be a sampling sequence. If $D(X)>b$ and $S$ is the generating
function of $X$, a consequence of the Beurling-Malliavin theorem
is that there is an $h\in CW_{\epsilon}$ with zero set $U$, $\epsilon=D(X)-b$,
such that any $f\in PW_{b}^{\infty}$ can be expressed in terms of
its samples,\\
\begin{equation}
f(z)=\sum_{k=-\infty}^{\infty}f(x_{k})\,\dfrac{S(z)h(x_{k})}{h(z)S'(x_{k})(z-x_{k})},\label{SamplingSeries}\end{equation}

\noindent with uniform convergence on compact subsets of $\mathbb{C}\backslash U$
\cite{Ko96,Se97}. $D(X)$ can be thought of as a generalization of
the {}``sampling rate'' to uniformly dense sequences $X$, and when
$X=\{\frac{k}{s}\}$ is a uniform sequence, the condition $b<D(X)=s$
simply says that $f$ is being oversampled beyond its Nyquist rate.
$h$ can be taken as constant for uniform sequences, and the expansion
(\ref{SamplingSeries}) reduces to the classical WSK sampling theorem.

\section{Main Results\label{SecMain}}

\noindent We fix $p\in(0,\infty]$ for the rest of this section. We
first show that if a bandlimited function is sampled at more than
twice its Nyquist rate, then we can reconstruct it up to a sign factor
from the absolute values of the samples.
\begin{thm}
\label{ThmMain}Let $f\in PW_{b}^{p}$ be real-valued on $\mathbb{R}$,
and let $X\subset\mathbb{R}$ be a separated, uniformly dense sequence
with $D(X)>2b$. Then f can be uniquely determined from $a_{k}=|f(x_{k})|$,
up to a sign factor.\end{thm}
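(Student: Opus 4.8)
The plan is to square the problem away into the theory of entire functions. Since $f$ is real-valued on $\mathbb{R}$ and lies in $PW_b^p$, the knowledge of $a_k=|f(x_k)|$ is equivalent to knowing $f(x_k)^2$, so I would first consider the function $g=f^2$. This $g$ is again entire, real-valued and nonnegative on $\mathbb{R}$, and its type doubles: $\mathrm{type}(g)\leq 2\pi b$, so $g$ belongs (up to the integrability subtlety) to a Paley-Wiener or Cartwright class of width $2b$. The hypothesis $D(X)>2b$ is exactly the statement that $X$ oversamples functions of bandwidth $2b$, so by the sampling expansion (\ref{SamplingSeries}) the values $g(x_k)=a_k^2$ determine $g$ uniquely. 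Thus from the unsigned samples we can recover the single function $g=f^2$ with certainty.

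\noindent The crux is then to show that $g$ determines $f$ up to a global sign. I would exploit that $g=f^2\geq 0$ on $\mathbb{R}$, which forces every real zero of $g$ to have even multiplicity and every nonreal zero to occur in a conjugate-symmetric fashion (since $g$ is real on $\mathbb{R}$, zeros come in conjugate pairs $u,\overline u$). The factorization problem is to write $g$ as a product of two entire factors. Using the canonical product representation (\ref{CanonProd}) from the Cartwright--Levinson theorem applied to $g$, I would split the zeros of $g$ into two halves so that each half forms the zero set of an admissible factor of type $\pi b$. Concretely, for the real zeros I assign exactly half of the (even) multiplicity to each factor, and for each conjugate pair $\{u,\overline u\}$ I must decide how to distribute $u$ and $\overline u$ between the two factors. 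The candidate reconstruction $\tilde f$ is built as the canonical product over one such choice of half the zeros, matched to $g$ by $\tilde f^2=g$ after fixing the leading constant.

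\noindent The main obstacle, and the delicate part of the argument, is uniqueness: a priori there are many ways to split the conjugate pairs, and I must show that only one distribution yields a genuine factor $\tilde f$ that is itself real-valued on $\mathbb{R}$ and of the correct type $\pi b$ (hence lies in $PW_b^p$). The reality constraint is what pins this down: if $\tilde f$ is to be real on $\mathbb{R}$, its zero set must be conjugate-symmetric, so for each pair $\{u,\overline u\}$ the factor $\tilde f$ must contain \emph{both} or \emph{neither}. But $\tilde f^2=g$ means $\tilde f$ carries exactly half the multiplicity of each zero of $g$; a strictly nonreal conjugate pair of $g$ of total multiplicity $2m$ must therefore be split as $m$ copies of $u$ and $m$ copies of $\overline u$ in $\tilde f$, which is automatically conjugate-symmetric and forced. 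I expect the real zeros to need the most care: a real zero of $g$ of multiplicity $2m$ gives $\tilde f$ a real zero of multiplicity $m$, again forced, so in fact the zero set of any real square-root factor is completely determined by that of $g$. Consequently $\tilde f$ is unique up to the leading constant, whose modulus is fixed by $\tilde f^2=g$ and whose sign is the unavoidable global $\pm$ ambiguity. I would close by verifying that the resulting $\tilde f$ indeed has type $\pi b$ and the correct $L^p$ behavior, placing it in $PW_b^p$, so that $f$ is recovered up to sign as claimed.
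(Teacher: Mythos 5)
Your proposal is correct, and it reaches the paper's conclusion by a genuinely different route in its second half. The first step is identical to the paper's: since $f$ is real on $\mathbb{R}$, the data $a_k^2=f(x_k)^2$ are the samples of $f^2\in PW_{2b}^{\infty}$, and $D(X)>2b$ makes $X$ a sampling (hence uniqueness) sequence for that space via (\ref{SamplingSeries}), so $g=f^2$ is pinned down. Where you diverge is in extracting the square root. The paper does it \emph{constructively}: it shifts to a horizontal line $\mathrm{Im}\,z=c$ on which $f$ has no zeros, unwraps the phase of $g$ there by integrating $\mathrm{Im}(g'/g)$ as in (\ref{fSamp}), and resamples back to $\mathbb{R}$ with (\ref{MainSamp2}); this is precisely what becomes the FFT algorithm of Section \ref{SecAlgo}. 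You instead argue \emph{abstractly} that a real square root of $g$ in $PW_b^p$ is unique up to sign, via zero multiplicities and the canonical product (\ref{CanonProd}). That argument is valid, but it can be compressed considerably: once $\tilde f^2=f^2$ is known, $(\tilde f-f)(\tilde f+f)\equiv 0$ and the ring of entire functions has no zero divisors, so $\tilde f\equiv\pm f$ immediately --- no zero bookkeeping is needed. In particular your worry about "how to distribute $u$ and $\overline u$ between the two factors" is moot: you are factoring $g$ as a perfect square, not as a product of two possibly different factors, so the multiplicity of $\tilde f$ at every point is forced to be exactly half that of $g$, and there is no choice to analyze. The trade-off between the two routes is the usual one: your argument is shorter and settles uniqueness cleanly, while the paper's phase-unwrapping proof is longer but yields the numerical reconstruction procedure that the rest of the paper depends on.
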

\begin{proof}
\noindent We normalize $b=1$ without loss of generality. Let $S$
be the generating function of $X$ and suppose $h$ is given as in
(\ref{SamplingSeries}). The zeros of $f$ and $h$ are countable,
so we can choose $c>0$ so that $f$ and $h$ have no zeros on the
line $L=\{z:\mathrm{Im}(z)=c\}$. We can write $g=f(\cdot+ic)^{2}\in PW_{2}^{p/2}$
as

\noindent \begin{equation}
g(z)=\sum_{k=-\infty}^{\infty}a_{k}^{2}\,\dfrac{S(z+ic)h(x_{k})}{h(z+ic)S'(x_{k})(z+ic-x_{k})}.\label{MainSamp1}\end{equation}

\noindent From Bernstein's inequality, $g'\in PW_{2}^{p/2}$ and differentiating
(\ref{MainSamp1}) gives a similar expansion for $g'(z)$. Let $a_{k}^{*}=f(x_{k}+ic)$
and choose a point $x_{l}\in X$. Since $f$ has no zeros on $L$,
there is a branch of $\arg f$, which we denote by $\arg_{0}f$, that
is continuous on $L$ and satisfies $\arg_{0}f(x_{l}+ic)\in(-\pi,\pi]$.
We define $\arg_{0}g$ in the same way, and we then have

\noindent \begin{eqnarray}
a_{k}^{*} & = & \exp\left(\frac{1}{2}\log|g(x_{k})|+i\arg_{0}f(x_{k}+ic)\right)\\
 & = & \eta|g(x_{k})|^{1/2}\exp\left(\dfrac{i}{2}\left(\int_{x_{l}}^{x_{k}}\mathrm{Im}\left(\frac{g'(t)}{g(t)}\right)dt+\arg_{0}g(x_{l})\right)\right)\label{fSamp}\end{eqnarray}

\noindent where $\eta=\exp\left(i\left(\frac{1}{2}\arg_{0}g(x_{l})-\arg_{0}f(x_{l}+ic)\right)\right)=\pm1$.
We have now determined samples of $f$, which we can use to express
$f$ as

\noindent \begin{equation}
f(z)=\eta\sum_{k=-\infty}^{\infty}a_{k}^{*}\,\dfrac{S(z-ic)h(x_{k})}{h(z-ic)S'(x_{k})(z-ic-x_{k})}.\label{MainSamp2}\end{equation}
\end{proof}
\begin{rem*}
The proof of Theorem \ref{ThmMain} suggests a three-step procedure
to recover $f$ from $\{a_{k}\}$. We can determine $f^{2}$ from
$\{a_{k}\}$ and take its square root by unwrapping its phase. Since
$f$ will typically have zeros on the real axis, we first move up
in the complex plane using (\ref{MainSamp1}), unwrap the phase there
with (\ref{fSamp}) and then move back to the real axis with (\ref{MainSamp2}).
We will use this approach in Section \ref{SecAlgo}.
\end{rem*}
The next result shows that Theorem \ref{ThmMain} is in a sense sharp.
If we sample a function at less than twice its Nyquist rate, it may
or may not be uniquely determined by the absolute values of the samples,
essentially depending on {}``how many'' of the samples are zero.
\begin{thm}
\label{ThmUnder}Let $X\subset\mathbb{R}$ be a separated sequence
with $D(X)<2b$. Then there is a real-valued function $f\in PW_{b}^{p}$
that cannot be uniquely determined from $|f(x_{k})|$ up to a sign
factor. If in addition $D(X)>\frac{4}{3}b$, then there is another
real-valued function $\tilde{f}\in PW_{b}^{p}$ that can be uniquely
determined from \textup{$|\tilde{f}(x_{k})|$} up to a sign factor,
but for which there is no subsequence $Y\subset X$ with $\tilde{f}(y_{k})=0$
and $\lim_{r\to\infty}\frac{N(Y,[-r,r])}{r}\geq b$.\end{thm}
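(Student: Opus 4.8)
The plan is to reduce everything to the elementary observation that, for real-valued $f,\hat f\in PW_{b}^{p}$, one has $|f(x_{k})|=|\hat f(x_{k})|$ for all $k$ exactly when the product $(f-\hat f)(f+\hat f)$ vanishes on all of $X$. Consequently, a real $f$ fails to be determined up to a sign factor precisely when there exist nonzero real $g,h\in PW_{b}^{p}$ with $g+h=2f$ and $gh$ vanishing on $X$, in which case the competing function is $\hat f=\tfrac12(h-g)$. The two assertions then amount to constructing such a pair in the first case and excluding it in the second. Throughout I normalize $b=1$ and split $X$ into its even- and odd-indexed subsequences $X_{1}=\{x_{2j}\}$ and $X_{2}=\{x_{2j+1}\}$, each separated and uniformly dense with $D(X_{i})=\tfrac12 D(X)$.

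For the first assertion, where $D(X)<2$, I would use that $D(X_{i})=\tfrac12 D(X)<1$, so the generating function $S_{i}$ of $X_{i}$ lies in the Cartwright class $CW_{D(X_{i})}$; multiplying by a real nonnegative Beurling--Malliavin factor $\phi_{i}\in CW_{1-D(X_{i})}$ produces a nonzero real $g=S_{1}\phi_{1}\in PW_{1}^{p}$ vanishing on $X_{1}$ and a nonzero real $h=S_{2}\phi_{2}\in PW_{1}^{p}$ vanishing on $X_{2}$. Setting $f=\tfrac12(g+h)$ and $\hat f=\tfrac12(h-g)$ gives $f^{2}-\hat f^{2}=gh$, which vanishes on $X_{1}\cup X_{2}=X$, so $|f(x_{k})|=|\hat f(x_{k})|$ for all $k$; since $g,h\not\equiv0$ we have $f\neq\pm\hat f$, so $f$ is not determined up to sign.

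For the second assertion I would first record the density obstruction that forces recoverability. Suppose a real $\tilde f\in PW_{1}^{p}$ is \emph{not} determined up to sign, with competitor $\hat f\neq\pm\tilde f$, and put $g=\tilde f-\hat f$, $h=\tilde f+\hat f$, both nonzero real elements of $PW_{1}^{p}\subset CW_{1}$. Let $Z=\{x_{k}:\tilde f(x_{k})=0\}$; at each point of $X$ the product $gh$ vanishes, and at points of $Z$ both factors vanish (since there $g+h=0$ and $gh=0$), while off $Z$ exactly one factor vanishes, partitioning $X\setminus Z$ into $A$ (where $g=0$) and $B$ (where $h=0$). Writing $\rho(S)=\limsup_{r\to\infty}N(S,[-r,r])/r$, normalized so that $\rho(X)=D(X)$ and so that the real zeros of a nonzero function in $CW_{1}$ have $\rho\le1$ (Cartwright--Levinson), I get $\rho(Z\cup A)\le1$ and $\rho(Z\cup B)\le1$. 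Adding these and using the exact count $N(Z\cup A,I)+N(Z\cup B,I)=N(X,I)+N(Z,I)$ (inclusion--exclusion, valid since $A,B,Z$ are disjoint with union $X$) together with $\rho(X)=D(X)$ yields $\rho(Z)\le 2-D(X)$. Hence any real $\tilde f\in PW_{1}^{p}$ whose zeros in $X$ satisfy $\rho(Z)>2-D(X)$ is automatically determined up to sign.

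It then remains to construct such an $\tilde f$ with sparse zeros. I would take $Z=X_{1}$ and set $\tilde f=S_{1}\psi$, where $S_{1}$ is the generating function of $X_{1}$ and $\psi\in CW_{1-D(X_{1})}$ is a real nonnegative Beurling--Malliavin multiplier (available since $D(X_{1})=\tfrac12D(X)<1$), chosen so that its zeros avoid the countable set $X_{2}$; then $\tilde f\in PW_{1}^{p}$ is real and its only zeros in $X$ form $Z=X_{1}$, of density $\rho(Z)=\tfrac12 D(X)$. The recoverability criterion $\rho(Z)>2-D(X)$ becomes $\tfrac12 D(X)>2-D(X)$, i.e. $D(X)>\tfrac43$, which is exactly the hypothesis, so $\tilde f$ is determined up to sign; and since $\rho(Z)=\tfrac12 D(X)<1$ (as $D(X)<2$), no zero-subsequence $Y\subset X$ of $\tilde f$ attains density $\ge1$. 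The main obstacle is the recoverability half: the density bound $\rho(Z)\le2-D(X)$ must be extracted carefully, because the sets $A,B$ arising from a competitor need not be uniformly dense, so I would use the exact inclusion--exclusion count rather than adding upper densities, and I must verify that the Beurling--Malliavin factor introduces no spurious zeros on $X_{2}$, so that $X_{1}$ is genuinely the full zero set of $\tilde f$ in $X$.
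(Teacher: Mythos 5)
Your proposal is correct, and while the first assertion is handled exactly as in the paper (split $X$ into even- and odd-indexed halves, form $S_{1}\phi_{1}\pm S_{2}\phi_{2}$ with Beurling--Malliavin multipliers), your proof of the second assertion takes a genuinely different route. The paper also sets $\tilde f=S_{1}h_{3}$, but it then expands $\tilde f^{2}-g^{2}$ as a canonical product to write it as $S_{1}S_{2}h_{4}$ with $h_{4}\in CW_{\epsilon}$, observes that $g^{2}=S_{1}(S_{1}h_{3}^{2}-S_{2}h_{4})$ must have only double zeros because it has an analytic square root, deduces that $h_{4}$ vanishes on $X_{1}$, and contradicts Cartwright--Levinson since $D(X_{1})=1-\tfrac{\epsilon}{2}>\epsilon$. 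You instead factor $\tilde f^{2}-\hat f^{2}=(\tilde f-\hat f)(\tilde f+\hat f)$ and run a zero-density count directly on the two linear factors, using the exact inclusion--exclusion identity $N(Z\cup A,I)+N(Z\cup B,I)=N(X,I)+N(Z,I)$ to avoid adding upper densities of the possibly irregular sets $A$ and $B$; this is sound, since the limit $\lim_{r}N(X,[-r,r])/r$ exists for a uniformly dense $X$ and the Cartwright--Levinson bound applies to each of the nonzero real functions $\tilde f\pm\hat f\in CW_{1}$. What your approach buys is a clean quantitative criterion --- any real $\tilde f\in PW_{1}^{p}$ whose zero set $Z$ on $X$ satisfies $\rho(Z)>2-D(X)$ is determined up to sign --- which yields the $\tfrac{4}{3}$ threshold transparently and, as a bonus, re-derives Theorem \ref{ThmMain} for real zeros when $D(X)>2$; it also sidesteps the canonical-product factorization and the double-zero argument entirely. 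What it costs is nothing essential: the one point to keep fixed is the normalization of the density (your $\rho(X)=D(X)$ convention matches the paper's internal usage, and the $\tfrac{4}{3}$ threshold is invariant under the alternative convention $\rho(X)=2D(X)$), and the choice of the multiplier $\psi$ with no zeros on $X_{2}$ is justified at the same level of detail as the paper's translation trick $h_{3}(z)=h_{1}(z+c)$.
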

\begin{proof}
As before, we normalize $b=1$. Suppose $D(X)=2-\epsilon$ for some
$\epsilon>0$, and let $X_{1}=\{x_{2k}\}$ and $X_{2}=\{x_{2k+1}\}$.
It follows from the definition (\ref{Density}) that $D(X_{1})=D(X_{2})=1-\tfrac{\epsilon}{2}$.
Now let $S_{1}$ and $S_{2}$ be the generating functions of $X_{1}$
and $X_{2}$. By the Beurling-Malliavin theorem, we can find $h_{1},h_{2}\in CW_{\epsilon/2}$
such that $S_{1}h_{1}$ and $S_{2}h_{2}$ are in $PW_{1}^{p}$. Then
the functions $f_{1}=S_{1}h_{1}+S_{2}h_{2}$ and $f_{2}=$$S_{1}h_{1}-S_{2}h_{2}$
are also in $PW_{1}^{p}$ and satisfy $|f_{1}(X)|=|f_{2}(X)|$.\\

For the other part of Theorem \ref{ThmUnder}, suppose $\epsilon$
above satisfies $\epsilon<\tfrac{2}{3}$. By countability, we can
choose $c$ so that $h_{3}(z)=h_{1}(z+c)$ has no zeros on $X$. Let
$\tilde{f}=S_{1}h_{3}$, so that among the samples of $\tilde{f}$
at $X$, only the ones at $X_{1}$ are zero. If there is any real-valued
$g\in PW_{1}^{p}$ with $|\tilde{f}(X)|=|g(X)|$, then $\tilde{f}{}^{2}-g^{2}$
is in $PW_{2}^{p/2}$ and has zeros at $X$, so by considering canonical
product expansions, $\tilde{f}{}^{2}-g^{2}=S_{1}S_{2}h_{4}$ for some
$h_{4}\in CW_{\epsilon}$. Since $g^{2}=S_{1}(S_{1}h_{3}^{2}-S_{2}h_{4})$
has an analytic square root, it can only have double zeros, so in
particular, $S_{1}h_{3}^{2}-S_{2}h_{4}$ has to be zero on $X_{1}$.
This implies that $h_{4}$ must have zeros on $X_{1}$, which contradicts
the Cartwright-Levinson theorem because $D(X_{1})=1-\tfrac{\epsilon}{2}>\epsilon$.
So $h_{4}\equiv0$ and $\tilde{f}=\pm g$. \end{proof}
\begin{rem*}
The nonexistence of $Y$ in Theorem \ref{ThmUnder} is what makes
the result interesting, as it means that $\tilde{f}$ cannot be determined
from $Y$ by just using a canonical product. In other words, the nonzero
samples at $X_{2}$ play a role in the uniqueness of $\tilde{f}$.
However, there appears to be no simple characterization of all such
functions $\tilde{f}$ or a numerically useful method of computing
$\tilde{f}$ from $|\tilde{f}(X)|$.
\end{rem*}
$\,$
\begin{rem*}
We have not considered the border case of $D(X)=2b$ in the above
results, in which case the conditions required on the sequence $X$
would become more subtle and depend on the value of $p$. However,
in the elementary case where $x_{k}=\frac{k}{2b}$ is a uniform sequence
and $p=2$, the conclusion of Theorem \ref{ThmMain} still holds by
just using the WSK sampling theorem in place of (\ref{MainSamp1})
and (\ref{MainSamp2}).
\end{rem*}
$\,$

There are no simple analogs of these results if we allow $f\in PW_{b}^{p}$
to be complex-valued. In Theorem \ref{ThmMain}, we used the fact
that when $f$ is real-valued, $f^{2}$ has the same samples as $|f|^{2}$,
but this is no longer the case for complex-valued $f$. In general,
such an $f$ will have complex zeros $u_{k}$ and complex-valued functions
of the form $Bf$, where $B$ is a Blaschke product formed from any
subset of $\{\overline{u_{k}}\}$, will be in $PW_{b}^{p}$ and satisfy
$|Bf|=|f|$ identically on $\mathbb{R}$. If we require all the zeros
of $f$ to be real, then since $f$ can be written as a canonical
product over them, it is simply a modulation of a real-valued function
$g$, i.e. it has the form $f(z)=e^{i(cz+d)}g(z)$ for $c,d\in\mathbb{R}$.
This situation is in contrast to the findings in \cite{BCE06}, where
the types of frames the authors studied had results for complex vectors
comparable to those outlined in Section \ref{SecIntro} for real vectors.

\section{A Reconstruction Algorithm\label{SecAlgo}}

\noindent We now describe how to computationally implement the technique
in the proof of Theorem \ref{ThmMain}. We restrict our attention
to uniform sampling sequences here, as they lead to convolution-type
sampling series that can be calculated efficiently by Fast Fourier
Transform (FFT) methods, but the same ideas can be adapted to the
nonuniform case. We first define the following functions:\begin{eqnarray*}
G(z,M) & = & \frac{\sin(\pi z)}{\pi z}e^{-\frac{\pi}{2M}z^{2}}\\
G'(z,M) & = & \left(\frac{\cos(\pi z)}{\pi z}-\frac{\sin(\pi z)}{M}-\frac{\sin(\pi z)}{\pi z}\right)e^{-\frac{\pi}{2M}z^{2}}\\
G^{*}(z,M) & = & \frac{1}{M}\int_{z-M}^{z}G(t,M)dt\end{eqnarray*}
We also denote the strip $\{z:|\mathrm{Im}(z)|<\delta\}$ by $T_{\delta}$.
Then we have the following results from \cite{SS03}, reproduced here
in slightly different forms for our purposes.
\begin{thm}
\noindent \label{ThmConvFact1}(Schmeisser-Stenger, 2002) Let $f\in PW_{b}^{\infty}$,
$s>b$ and $d<1$. Then for $\mathrm{Re}(z)\in[-dM,dM]$, $f(\frac{z}{s})=\sum_{k=-M}^{M}f(\frac{k}{s})G(z-k,M)+E(z)$,
where $|E(z)|\leq C_{1}M^{-1/2}e^{-\frac{\pi(1-d)}{2}\left(1-\frac{b}{s}\right)M+2\pi|Im(z)|}\cdot\left\Vert f\right\Vert _{L^{\infty}}$
for some constant $C_{1}=C_{1}(\tfrac{b}{s})$.
\end{thm}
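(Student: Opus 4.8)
The plan is to prove this as a rescaling of the Gaussian-regularized cardinal series, following the residue method of Schmeisser and Stenger. First I would normalize by setting $F(w)=f(w/s)$: then $F$ is entire of exponential type at most $\pi b/s$ with $b/s<1$, satisfies $|F(x)|\le\|f\|_{L^\infty}$ on $\mathbb{R}$, and obeys the Plancherel-Polya bound $|F(x+iy)|\le\|f\|_{L^\infty}e^{\pi(b/s)|y|}$, while the nodes $k/s$ of $f$ become the integers for $F$. The claim becomes a statement about recovering $F(z)$ for $\mathrm{Re}(z)\in[-dM,dM]$ from the samples $F(k)$, $|k|\le M$, using the kernel $G(z-k,M)$. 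There are two competing error sources that I would keep separate: the spatial truncation of the tail $|k|>M$, and the spectral aliasing caused by $G(\cdot,M)$ not being an exact bandlimited reconstruction kernel, the first governed by the guard $1-d$ between the evaluation region and the truncation boundary, the second by the oversampling guard $1-b/s$ between the spectrum of $F$ and the aliasing frequency.

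The clean device for producing an exact error formula valid for complex $z$ is a single contour integral. With $\gamma$ the Gaussian width of the kernel (so that $\gamma=\tfrac{\pi}{2M}$ recovers $G$), I would consider
\[
\Phi(w)=\frac{\sin(\pi z)}{\sin(\pi w)}\,\frac{e^{-\gamma(z-w)^{2}}}{w-z}\,F(w),
\]
integrated over the boundary of the rectangle $R$ with vertical sides $\mathrm{Re}(w)=\pm(M+\tfrac12)$ and horizontal sides $\mathrm{Im}(w)=\pm Y$. The simple pole at $w=z$ has residue $F(z)$, and the poles at $w=k$, $|k|\le M$, coming from the zeros of $\sin\pi w$, have residues $-F(k)\,G(z-k,M)$ after using $\sin\pi(z-k)=(-1)^{k}\sin\pi z$. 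By the residue theorem $E(z)=\frac{1}{2\pi i}\oint_{\partial R}\Phi(w)\,dw$, so the problem reduces to estimating the four sides. On the vertical sides $|\csc\pi w|=\mathrm{sech}(\pi\,\mathrm{Im}\,w)$ is bounded and $\mathrm{Re}((z-w)^{2})\ge((1-d)M)^{2}-(\mathrm{Im}\,w)^{2}$, so the Gaussian contributes a factor of order $e^{-\gamma(1-d)^{2}M^{2}}$, which is the spatial truncation estimate. On the horizontal sides at height $Y$ one has $|\csc\pi w|\le Ce^{-\pi Y}$ and $|F(w)|\le\|f\|_{L^\infty}e^{\pi(b/s)Y}$, but the Gaussian now grows as $e^{+\gamma Y^{2}}$; balancing the decay $e^{-\pi(1-b/s)Y}$ against this growth fixes the optimal height and yields the aliasing estimate of order $e^{-\pi^{2}(1-b/s)^{2}/(4\gamma)}$. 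The factor $|\sin\pi z|\le e^{\pi|\mathrm{Im}\,z|}$ together with the off-axis growth of the Gaussian accounts for the $e^{2\pi|\mathrm{Im}\,z|}$ in the stated bound, and a Laplace-type evaluation of the optimized line integral gives the $M^{-1/2}$ prefactor with a constant $C_{1}$ depending only on $b/s$.

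The exponent in the theorem then comes from choosing the Gaussian width to equalize the two estimates. Setting $\gamma(1-d)^{2}M^{2}=\pi^{2}(1-b/s)^{2}/(4\gamma)$ gives the balanced width $\gamma\sim\frac{\pi(1-b/s)}{2(1-d)M}$, and substituting it into either estimate produces the common exponent $\frac{\pi(1-d)}{2}(1-b/s)M$, in which the spatial and spectral guards enter multiplicatively rather than as a minimum. This is precisely the tuning built into the kernel $G$, and it is why the truncation bound $|k|\le M$ and the $1/M$ scaling inside the Gaussian are linked. As an independent check on the spectral half I would recompute the infinite-series aliasing by Poisson summation: since $\widehat{G}(\cdot,M)$ is a Gaussian-smoothed indicator, the aliasing error is exactly the Gaussian mass spilling past the guard band of half-width $(1-b/s)/2$, which reproduces the same spectral exponent.

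The crux, and the step I expect to be the main obstacle, is this balancing together with its uniformity: I must verify that the vertical and horizontal contributions are genuinely of the same exponential order over the entire admissible range $\mathrm{Re}(z)\in[-dM,dM]$ and for $\mathrm{Im}(z)$ throughout the relevant strip, rather than one term silently dominating and degrading the exponent. Concretely, the delicate point is the sharp spectral estimate on the horizontal sides, where the Gaussian works against the $\csc$ decay, and controlling the resulting Gaussian integral uniformly so that the $M^{-1/2}$ prefactor and the $b/s$-dependence of $C_{1}$ are correct. Once the balance is established, the remaining bounds on $\Phi$ over the four sides are routine estimates of Gaussian and exponential integrals.
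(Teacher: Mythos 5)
The paper does not prove this theorem at all: it is quoted from \cite{SS03} ``in slightly different forms for our purposes,'' so your residue-calculus reconstruction can only be compared with the method of the cited source. On that score your outline is the right one: the rescaling to a function of type $\pi b/s$, the integrand $\frac{\sin\pi z}{\sin\pi w}\cdot\frac{e^{-\gamma(z-w)^{2}}}{w-z}F(w)$ over a rectangle, the residue bookkeeping that exhibits $E(z)$ as a boundary integral, and the split into a spatial truncation term (vertical sides) and a spectral aliasing term (horizontal sides) are exactly how this family of estimates is proved, and your individual side estimates are correct in form.

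The gap is in the balancing step, which you yourself flag as the crux. With the kernel's actual width $\gamma=\tfrac{\pi}{2M}$, your two estimates read $e^{-\gamma((1-d)M)^{2}}=e^{-\frac{\pi}{2}(1-d)^{2}M}$ for truncation and $e^{-\pi^{2}(1-b/s)^{2}/(4\gamma)}=e^{-\frac{\pi}{2}(1-b/s)^{2}M}$ for aliasing. The width that balances these is $\gamma\sim\frac{\pi(1-b/s)}{2(1-d)M}$, as you compute, but this is \emph{not} ``the tuning built into $G$'' unless $d=b/s$: the width $\tfrac{\pi}{2M}$ of $G$ depends on neither $d$ nor $b/s$. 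For the given kernel the sum of your two bounds is only $O\bigl(e^{-\frac{\pi}{2}\min((1-d)^{2},(1-b/s)^{2})M}\bigr)$, and since $\min(a,b)^{2}\leq ab$ with equality only when $a=b$, this is strictly weaker than the stated product exponent $\frac{\pi}{2}(1-d)(1-b/s)M$ whenever $d\neq b/s$. Nor is this mere slack in the estimates: taking $f\equiv1\in PW_{b}^{\infty}$, $d=\tfrac34$ and $b/s$ small, the truncated tail at $z=\tfrac{3M}{4}-\tfrac12$ is genuinely of order $M^{-1}e^{-\frac{\pi}{32}M}$, which exceeds the claimed $M^{-1/2}e^{-\frac{\pi}{8}M}$ for large $M$. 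So your argument cannot close as written; you would either have to settle for the $\min$-form exponent (which is what this fixed kernel actually delivers, and still gives the exponential convergence needed downstream in Theorem \ref{ThmAlgo}), or replace $G$ by a kernel with the $(b/s,d)$-adapted width, as in the original Schmeisser--Stenger formulation where the sample window is centered at $z$ and the Gaussian width is tuned to the exponential type. The remaining items (the residues, the $\mathrm{sech}$ bound on the vertical sides, the optimization over the height $Y$ on the horizontal sides, and the provenance of the $e^{2\pi|\mathrm{Im}(z)|}$ and $M^{-1/2}$ factors) are fine in outline but inherit whichever exponent the balancing actually supports.
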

\noindent $\,$
\begin{thm}
\noindent \label{ThmConvFact2} (Schmeisser-Stenger, 2002) Suppose
$f$ is analytic in $T_{\delta}$ and $|f(z)|\leq K|z|$ for $z\in T_{\delta}$
and some constant $K$. Then for $z\in\mathbb{R}$,\textup{$-\frac{d}{2}\leq z\leq\frac{d}{2}$,}
$f(z)=\sum_{k=-dM}^{dM}f(\frac{k}{M})G(Mz-k,M)+E(z)$, where $|E(z)|\leq C_{2}K(M/\delta)^{1/2}e^{-\frac{\pi\delta M}{4}}\left\Vert f\right\Vert _{L^{\infty}}$
for some constant $C_{2}$.
\end{thm}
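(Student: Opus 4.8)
The plan is to express the approximation error as a single contour integral via a residue calculation, and then to estimate that integral along the boundary of the strip $T_{\delta}$ in which $f$ is analytic. First I would fix a real $z\in[-\tfrac{d}{2},\tfrac{d}{2}]$ (generic, so that $Mz\notin\mathbb{Z}$; the node case follows by continuity) and introduce the auxiliary function
\[
F(w)=\frac{f(w)\,\sin(\pi Mz)}{(z-w)\,\sin(\pi Mw)}\,e^{-\frac{\pi M}{2}(z-w)^{2}}.
\]
This is engineered so that its residues reproduce both sides of the desired formula: since $\sin(\pi Mw)$ has simple zeros at $w=\tfrac{k}{M}$ with derivative $\pi M(-1)^{k}$, a short computation (using $(-1)^{k}\sin(\pi Mz)=\sin(\pi(Mz-k))$ and $\tfrac{\pi M}{2}(z-\tfrac{k}{M})^{2}=\tfrac{\pi}{2M}(Mz-k)^{2}$) gives $\mathrm{Res}_{w=k/M}F=f(\tfrac{k}{M})\,G(Mz-k,M)$, while the factor $\tfrac{1}{z-w}$ contributes the single residue $\mathrm{Res}_{w=z}F=-f(z)$.

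Next I would integrate $F$ counterclockwise around the rectangle $R$ whose horizontal sides lie on $\mathrm{Im}(w)=\pm\delta$ and whose vertical sides lie on $\mathrm{Re}(w)=\pm(d+\tfrac{1}{2M})$, placed between consecutive nodes (integrating at height $\delta-\epsilon$ and letting $\epsilon\to0$ if needed). The residue theorem then gives the exact identity
\[
E(z)=f(z)-\sum_{k=-dM}^{dM}f(\tfrac{k}{M})\,G(Mz-k,M)=-\frac{1}{2\pi i}\oint_{\partial R}F(w)\,dw,
\]
so the entire estimate reduces to bounding the boundary integral. On the horizontal sides $w=u\pm i\delta$ I would combine the three elementary bounds $|\sin(\pi Mw)|\geq\sinh(\pi M\delta)\gtrsim e^{\pi M\delta}$, the identity $|e^{-\frac{\pi M}{2}(z-w)^{2}}|=e^{\frac{\pi M\delta^{2}}{2}}e^{-\frac{\pi M}{2}(z-u)^{2}}$, and $|z-w|\geq\delta$, together with the growth hypothesis $|f(w)|\leq K|w|$. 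Integrating the Gaussian $e^{-\frac{\pi M}{2}(z-u)^{2}}$ against the slowly growing factor $|f(u\pm i\delta)|$ in $u$ yields polynomially bounded prefactors in $M$ and $\delta^{-1}$ that are at most of order $(M/\delta)^{1/2}$, and leaves the exponential factor $e^{-\pi M\delta(1-\delta/2)}$, which for the moderate values of $\delta$ occurring here is dominated by $e^{-\pi\delta M/4}$.

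On the two vertical sides $w=\pm(d+\tfrac{1}{2M})+iv$ the restriction $|z|\leq\tfrac{d}{2}$ forces $\mathrm{Re}\!\left(-(z-w)^{2}\right)\leq-(\tfrac{d}{2})^{2}+\delta^{2}$, so these contribute a term of order $e^{-\pi Md^{2}/8+\pi M\delta^{2}/2}$ (times the bounded $1/\sin$ and linear $f$ factors); in the parameter regime where the theorem is used this is no larger than the horizontal contribution and is swept into the constant $C_{2}$. Collecting the two estimates and the prefactors then gives the stated bound $|E(z)|\leq C_{2}K(M/\delta)^{1/2}e^{-\pi\delta M/4}\Vert f\Vert_{L^{\infty}}$.

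The step I expect to be the main obstacle is the estimate on the horizontal sides, where two competing exponentials must be balanced: pushing the contour off the real axis to height $\delta$ makes $1/\sin(\pi Mw)$ decay like $e^{-\pi M\delta}$ but simultaneously inflates the Gaussian modulus by $e^{\pi M\delta^{2}/2}$, and extracting the correct exponential rate hinges on tracking this trade-off and choosing the contour height optimally (here the strip edge). A secondary difficulty is that $f$ is only assumed to grow linearly rather than to be bounded, so the factor $|f(w)|\leq K|w|$ inside the integral must be controlled using the Gaussian's decay in the real direction; this is also what ties the admissible node range $|k|\leq dM$ and the interval $|z|\leq\tfrac{d}{2}$ to the size of the error.
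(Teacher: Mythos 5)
The paper never proves this statement: it is imported from Schmeisser--Stenger \cite{SS03} (``reproduced here in slightly different forms''), so there is no internal proof to compare against. Your argument is, in substance, the standard proof of this family of sinc--Gaussian interpolation estimates, and it is essentially correct: the residues of $F(w)=\frac{f(w)\sin(\pi Mz)}{(z-w)\sin(\pi Mw)}e^{-\frac{\pi M}{2}(z-w)^{2}}$ at $w=\tfrac{k}{M}$ and $w=z$ do reproduce the two sides of the formula, the rectangle between consecutive nodes makes the $1/\sin$ factor bounded on the vertical sides, and the horizontal-side estimate correctly isolates the governing trade-off $e^{-\pi M\delta}\cdot e^{\pi M\delta^{2}/2}=e^{-\pi M\delta(1-\delta/2)}$.

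Two places where you wave rather than prove deserve to be made explicit. First, absorbing $e^{-\pi M\delta(1-\delta/2)}$ into $e^{-\pi\delta M/4}$ requires $\delta\leq\tfrac{3}{2}$ (and for $\delta$ substantially larger no contour inside the strip can deliver the stated rate), and the claim that the vertical sides are dominated requires roughly $\tfrac{d^{2}}{8}\geq\tfrac{\delta}{4}+\tfrac{\delta^{2}}{2}$; both conditions hold in the regime where the paper actually invokes the theorem ($\delta$ small, $d$ of order one), but they are genuine hypotheses of your argument, and your constant depends on $d$, so $C_{2}$ must be read as $C_{2}(d)$. Second, your estimate comes out as $C(d)K\,\delta^{-1}M^{-1/2}e^{-\pi M\delta(1-\delta/2)}$, i.e.\ with a prefactor that is actually better in $M$ than the stated $(M/\delta)^{1/2}$ and without the extra factor $\left\Vert f\right\Vert _{L^{\infty}}$ appearing in the paper's restatement; that mismatch lies in the (loosely restated) theorem, not in your proof.
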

\noindent Theorem \ref{ThmConvFact1} is a form of (\ref{SamplingSeries})
with a non-bandlimited $h$. It converges very rapidly in practice
for about $s\geq1.3b$. Theorem \ref{ThmConvFact2} is a version of
Theorem \ref{ThmConvFact1} as $s\to\infty$.\\

These results lead to the following numerical approach. We impose
a mild restriction to rule out some pathological functions, and assume
for notational convenience that an odd number of samples are used.
\begin{algorithm}
\noindent \label{Algo} Suppose $f\in PW_{b}^{\infty}$ is nonzero
in some strip $\{z:|\mathrm{Im}(z)-c|\leq\delta\}$, $\delta>0$.
Let $s>2b$ and $a_{k}=|f(\frac{k}{s})|$ for $-M\leq k\leq M$.\\

\noindent \textup{1: Compute $g_{M}(\frac{z}{s})=\sum_{k=-M}^{M}a_{k}^{2}\, G(z-k+ic,M)$
at $z=\frac{n}{M}$, $-M^{2}\leq n\leq M^{2}$.}

\noindent \textup{2: Compute $g_{M}^{\prime}(\frac{z}{s})=\sum_{k=-M}^{M}a_{k}^{2}\, G'(z-k+ic,M)$
for $z$ as above.}

\noindent \textup{3: Compute $Q_{M}(n)=\frac{1}{s}\sum_{k=(n-2)M}^{(n+1)M}\mathrm{Im}\left(\dfrac{g_{M}^{\prime}(\frac{k}{Ms})}{g_{M}(\frac{k}{Ms})}\right)G^{*}(Mn-k,M)$
for $-(M-2)\leq n\leq M-1$.}

\noindent \textup{4: Compute $R_{M}(n)$ given by $R_{M}(0)=0$, $R_{M}(n)=R_{M}(n-1)+Q_{M}(n-1)$
for $-(M-1)\leq n\leq M-1$.}

\noindent \textup{5: Compute $f_{M}(\frac{z}{s})=\sum_{k=-(M-1)}^{M-1}\sqrt{|g_{M}(\frac{k}{s})|}e^{\frac{i}{2}(R_{M}(k)+\arg g_{M}(0))}\, G(z-k-ic,M)$.}\\

\end{algorithm}
The convolutions in steps 1 and 2 can be performed by $2M$ FFTs of
size $2M+1$ each. Steps 3 and 4 are most efficiently done by direct
computations that respectively involve $O(M^{2})$ and $O(M)$ operations.
Step 5 involves $2N$ FFTs of size $2M+1$ for some integer $N$,
depending on how finely we want to compute $f$. This gives an overall
complexity of $O(M^{2}\log M)$.\\

Theorem \ref{ThmConvFact2} is used above to calculate the integral
in (\ref{fSamp}). The advantage of this approach is that the error
bound in Theorem \ref{ThmConvFact2} does not depend on the derivatives
of $f$. If we instead used conventional methods of integration such
as Gauss quadrature, we would need additional restrictions on $\delta$
to ensure that the algorithm actually converges. There is no simple
closed-form expression for $G^{*}$ but its numerical calculation
only depends on $M$, so we can tabulate its values by a standard
quadrature method using $O(M^{2})$ operations and reuse them for
different $f$ with a lookup table.\\

\noindent We can establish the following convergence result for Algorithm
\ref{Algo}.
\begin{thm}
\noindent \label{ThmAlgo}Let $f$ and $f_{M}$ be defined as in Algorithm
\ref{Algo}. As $M\rightarrow\infty$, $f_{M}\rightarrow\eta f$ uniformly
on compact subsets of $\mathbb{R}$, where $\eta=\pm1$ and $|f_{M}(z)-\eta f(z)|=O\left(e^{-\min\left(\frac{\pi}{16}\left(1-\frac{2b}{s}\right),\frac{\pi\delta}{8}\right)M+4\pi c}\right)$.
\end{thm}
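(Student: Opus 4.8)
The plan is to prove Theorem \ref{ThmAlgo} by showing that Algorithm \ref{Algo} is a discretization of the exact reconstruction procedure in Theorem \ref{ThmMain}, so that each step incurs an error controlled by Theorems \ref{ThmConvFact1} and \ref{ThmConvFact2}. First I would set up the comparison between the computed quantities and their idealized counterparts. Write $g(z)=f(z+ic)^2$ as in (\ref{MainSamp1}), so that $g_M$ from step 1 is a truncated convolution approximation to $g$. Since $f\in PW_b^\infty$ gives $g\in PW_{2b}^\infty$, Theorem \ref{ThmConvFact1} with band $2b$ and rate $s>2b$ applies directly: on $\mathrm{Re}(z)\in[-dM,dM]$ we get $|g_M(\frac{z}{s})-g(\frac{z}{s})|=O(M^{-1/2}e^{-\frac{\pi(1-d)}{2}(1-\frac{2b}{s})M+2\pi|\mathrm{Im}(z)|})$, and similarly for $g_M'$ versus $g'$ after invoking Bernstein's inequality to place $g'$ in $PW_{2b}^\infty$. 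The shift by $ic$ contributes the $e^{4\pi c}$-type factor through the $e^{2\pi|\mathrm{Im}(z)|}$ term (evaluated at imaginary part of order $c$), which is where the additive $4\pi c$ in the exponent originates.

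The crux is step 3, the phase integral. In the exact procedure we integrate $\mathrm{Im}(g'/g)$ along the horizontal line $L$ to recover $\arg_0 f$ via (\ref{fSamp}); the algorithm instead forms the discrete sum against $G^*$, which by construction approximates $\frac{1}{M}\int_{z-M}^{z}(\cdot)$. Here I would apply Theorem \ref{ThmConvFact2} to the integrand $\phi(t)=\mathrm{Im}(g'(t)/g(t))$. The hypothesis that $f$ is nonzero in $\{|\mathrm{Im}(z)-c|\le\delta\}$ is exactly what guarantees $g=f(\cdot+ic)^2$ is zero-free in a strip $T_\delta$ about the integration line, so $g'/g$ is analytic there and $\phi$ satisfies the linear-growth bound $|\phi(z)|\le K|z|$ required by Theorem \ref{ThmConvFact2}. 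This yields a quadrature error of order $(M/\delta)^{1/2}e^{-\frac{\pi\delta M}{4}}$, which after the various rescalings in the algorithm (the $1/s$ prefactor, the step size $1/M$, and the rediscretization) becomes the $e^{-\frac{\pi\delta}{8}M}$ contribution to the stated bound. The accumulation in step 4 is a Riemann-sum telescoping that converts $Q_M$ into the discrete antiderivative $R_M$ approximating the running integral $\int_{x_l}^{x_k}\phi$, and I would bound the accumulated error by the per-step error times $O(M)$, absorbing the polynomial factor into the exponential.

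I expect the main obstacle to be \emph{propagation of error through the nonlinear square-root/phase-unwrapping step}, rather than any single quadrature estimate. The reconstructed samples in step 5 have the form $\sqrt{|g_M|}\,e^{\frac{i}{2}(R_M+\arg g_M(0))}$, and a small error in $g_M$ or in the accumulated phase $R_M$ must be shown not to be amplified when we exponentiate and take the square root. The delicate point is that $g$ may be small (though nonzero) along the line, so the relative error in $g'/g$ could in principle blow up; this is controlled precisely by the zero-free-strip hypothesis and the $\log|f|$-integrability built into the $CW_b$ theory, which keeps $\int|\phi|$ finite. I would show that because $\arg_0 f$ is determined only up to the single global sign ambiguity $\eta=\pm1$ fixed at the base point, the errors enter additively in the exponent and the map $g_M\mapsto\sqrt{g_M}$ is Lipschitz away from the zero set, so the total error remains of the same exponential order. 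Finally I would combine the three error sources by taking their minimum in the exponent, accounting for the loss of the factor $(1-d)$ and the constants $C_1,C_2$ by choosing $d=\tfrac12$, which produces the $\frac{\pi}{16}(1-\frac{2b}{s})$ term and completes the bound $|f_M(z)-\eta f(z)|=O(e^{-\min(\frac{\pi}{16}(1-\frac{2b}{s}),\frac{\pi\delta}{8})M+4\pi c})$, with uniform convergence on compact real subsets following from the local uniformity of all the estimates.
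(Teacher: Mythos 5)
Your overall strategy is the same as the paper's: view the algorithm as a discretization of the three-step procedure from Theorem \ref{ThmMain}, control steps 1--2 with Theorem \ref{ThmConvFact1}, control the phase quadrature with Theorem \ref{ThmConvFact2} using the zero-free strip, and then propagate the errors through the square-root/phase-unwrapping step. However, three of your steps have concrete gaps. First, you cannot bound $g_M'-g'$ by applying Theorem \ref{ThmConvFact1} to $g'$: the algorithm only has the data $a_k^2=g(x_k)$ and computes the \emph{derivative of the interpolant}, $\sum_k a_k^2\,G'(z-k+ic,M)$, not the interpolant of the samples $g'(\frac{k}{s})$, so the quantity your estimate controls is not the one the algorithm produces. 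The paper instead notes that the error $E=g_M-g$ is entire and applies a Cauchy estimate on a disk of radius $\frac{1}{2\pi}$ to get $\Vert E'\Vert\le 2\pi\max|E|$; some argument of this kind is required. Second, the hypothesis $|\phi(z)|\le K|z|$ of Theorem \ref{ThmConvFact2} for $\phi=\mathrm{Im}(g'/g)$ is not automatic from analyticity in the strip; the paper devotes Lemma \ref{hDh} to proving, via the inner-outer factorization together with the $CW_b$ log-integrability and the Cartwright--Levinson zero counting, that $|g'/g|\le K(|z|^2+1)$ in $T_{\delta/2}$ --- note the growth is quadratic, not linear, and the resulting polynomial factors must be absorbed into the exponential at the end. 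You allude to the right ingredients here, but this step is a lemma, not an observation.

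Third, your accounting for the exponent $\frac{\pi}{16}\left(1-\frac{2b}{s}\right)$ is incorrect: taking $d=\frac{1}{2}$ in Theorem \ref{ThmConvFact1} gives $\frac{\pi(1-d)}{2}=\frac{\pi}{4}$, not $\frac{\pi}{16}$. The true source of the halving is that the square root in step 5 is only H\"older-$\frac{1}{2}$: the paper compares $|g_M|^{1/2}$ with $|g|^{1/2}$ via $\bigl||u|^{1/2}e^{i\theta}-|v|^{1/2}\bigr|\le|u-v|^{1/2}+|1-e^{i\theta}||v|^{1/2}$, so the sample error $E_1$ (obtained with $d=\frac{3}{4}$, exponent $\frac{\pi}{8}(1-\frac{2b}{s})$) enters the final bound as $E_1^{1/2}$, yielding $\frac{\pi}{16}(1-\frac{2b}{s})$. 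Your claim that $g_M\mapsto\sqrt{g_M}$ is Lipschitz away from the zero set would require a lower bound on $|g|$ that is uniform over the growing window $[-M,M]$, which is not available; the H\"older-$\frac{1}{2}$ estimate is precisely the device that avoids needing one. You also omit the theta-function bounds on $\sum_l|G(z-l-ic,M)|$ that control how sample-point errors are amplified by the resynthesis in step 5, though these contribute only polynomial factors. With these repairs your argument becomes the paper's proof.
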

This estimate is somewhat conservative and larger exponents of convergence
are possible if we make more assumptions on $f$, but it shows how
$\delta$ and $c$ affect the rate of convergence. Many real-world
bandlimited signals have most of their zeros on or near the real axis,
so choosing $c$ too small will result in a small $\delta$ while
choosing $c$ too large will sharply increase the constant in the
error bound. Values of $c$ between about $0.01$ to $0.25$ appear
to work well in practice.\\

\noindent Before we prove Theorem \ref{ThmAlgo}, we will need an
auxiliary lemma.
\begin{lem}
\label{hDh}Let $h\in PW_{b}^{\infty}$ be nonzero in $T_{\delta}$.
Then $\left|\frac{h'(z)}{h(z)}\right|\leq K(|z|^{2}+1)$ in $T_{\delta/2}$
for some constant $K$.\end{lem}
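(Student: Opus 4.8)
The plan is to represent $h$ through its canonical product and estimate the logarithmic derivative $h'/h$ directly. Since $h\in PW_b^\infty$ is bounded on $\mathbb{R}$ and of exponential type at most $\pi b$, it lies in $CW_b$, so the Cartwright--Levinson theorem applies and $h$ admits the expansion (\ref{CanonProd}). Because $h$ does not vanish in $T_\delta$, every zero $u_k\in U$ satisfies $|\mathrm{Im}(u_k)|\geq\delta$, hence $|u_k|\geq\delta$; in particular $0\notin U$, so the factor $z^{\delta_U}$ drops out, and $|u_k|^{-1}\leq\delta^{-1}$ for all $k$. The first conclusion of Cartwright--Levinson gives the counting estimate $n(r):=N(U,\{|w|\leq r\})=O(r)$, from which a routine integration by parts yields $\sum_k|u_k|^{-2}<\infty$.

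First I would differentiate (\ref{CanonProd}) to obtain $h'(z)/h(z)=iq+\lim_{r\to\infty}\sum_{|u_k|<r}(z-u_k)^{-1}$, interpreted with the same symmetric limit that makes the product converge. The difficulty is that this series is only conditionally convergent, so I would regroup it into genus-one form. Using the identity $(z-u_k)^{-1}=z\,[u_k(z-u_k)]^{-1}-u_k^{-1}$ together with the observation that evaluating the series at $z=0$ shows $\lim_{r\to\infty}\sum_{|u_k|<r}u_k^{-1}=iq-h'(0)/h(0)$ is a finite constant $\sigma$, I would rewrite $h'(z)/h(z)=(iq-\sigma)+\sum_k z\,[u_k(z-u_k)]^{-1}$, where the remaining series is now absolutely convergent by the bounds established below.

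Then I would fix $z\in T_{\delta/2}$ and note that $|\mathrm{Im}(z-u_k)|\geq\delta-\delta/2=\delta/2$, so $|z-u_k|\geq\delta/2$ for every $k$. I would split the series at $|u_k|=2|z|$. For the near zeros ($|u_k|\leq 2|z|$) there are $n(2|z|)=O(|z|+1)$ terms, each bounded by $|z|/(|u_k|\,|z-u_k|)\leq 2|z|/\delta^2$, giving a total contribution of $O(|z|^2+1)$. For the far zeros ($|u_k|>2|z|$) one has $|z-u_k|\geq|u_k|/2$, so the tail is dominated by $2|z|\sum_k|u_k|^{-2}=O(|z|)$. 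Adding the constant $|iq-\sigma|$, this produces $|h'(z)/h(z)|\leq K(|z|^2+1)$ for a suitable $K=K(\delta,b,h)$.

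The main obstacle is the passage from the conditionally convergent log-derivative series to an absolutely convergent one: the regrouping must be justified and the constant $\sigma$ shown finite, which is exactly where nonvanishing at the origin (guaranteed by $0\in T_{\delta/2}$ and $|u_k|\geq\delta$) is used. Once the series is absolutely convergent, the quadratic bound is essentially forced; it is the near-zero block, consisting of $O(|z|)$ terms each of size $O(|z|)$, that accounts for the $|z|^2$ growth, while the density estimate $\sum_k|u_k|^{-2}<\infty$ keeps the far tail merely linear.
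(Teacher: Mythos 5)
Your argument is correct, but it takes a genuinely different route from the paper. You work from the Cartwright--Levinson canonical product (\ref{CanonProd}), regroup the conditionally convergent log-derivative series into the absolutely convergent genus-one form $\sum_k z/[u_k(z-u_k)]$, and then get the quadratic bound by splitting at $|u_k|=2|z|$: the $O(|z|)$ near zeros each contribute $O(|z|)$ because $|z-u_k|\geq\delta/2$ and $|u_k|\geq\delta$, while the far tail is controlled by $\sum_k|u_k|^{-2}<\infty$, which follows from the linear zero-counting estimate. The paper instead shifts down by $i\delta$, observes that $e^{\pi ibz}h(z-i\delta)$ is bounded and analytic on the upper half-plane, and differentiates its inner--outer factorization; the outer (Poisson integral) term is controlled by the two-sided log-integrability condition (\ref{LogInt}) for Cartwright-class functions and the Blaschke term by part 2 of Cartwright--Levinson. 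Your version is more elementary in that it avoids the $H^{\infty}$ factorization and the nontrivial fact (\ref{LogInt}), at the cost of having to justify termwise differentiation of the symmetric limit and the finiteness of $\sigma=\lim_{r\to\infty}\sum_{|u_k|<r}u_k^{-1}$ --- both of which you address adequately (local uniform convergence of the partial products away from $U$, and evaluation at $z=0$ where $h(0)\neq0$ since $0\in T_{\delta}$). Both proofs land on the same $K(|z|^2+1)$ bound, and in both the quadratic growth is forced by the same mechanism: linearly many zeros within distance $O(|z|)$, each kept a fixed distance $\delta/2$ away from $z$.
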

\begin{proof}
\noindent Let $U=\{u_{k}\}$ be the zeros of $h(z-i\delta)$ lying
in the upper half plane $\mathbb{C}^{+}=\{z:\mathrm{Im}(z)>0\}$,
and suppose $\frac{\delta}{2}<\mathrm{Im}(z)<\frac{3\delta}{2}$.
The Plancherel-Polya inequality shows that $h(\cdot-i\delta)\in PW_{b}^{\infty}$
and that the function $e^{\pi ibz}h(z-i\delta)$ is bounded and analytic
on $\mathbb{C}^{+}$, so it has the inner-outer factorization \cite{Ga07}
\[
\log h(z-i\delta)=\frac{1}{\pi i}\int_{-\infty}^{\infty}\frac{1+zt}{(t-z)(t^{2}+1)}\log\left|h(t-i\delta)\right|dt+\log\left(\prod_{k}\frac{|u_{k}^{2}+1|}{u_{k}^{2}+1}\frac{z-u_{k}}{z-\overline{u_{k}}}\right)+Az+B\]

\noindent for some constants $A$ and $B$. We differentiate this
to find that\begin{eqnarray*}
\left|\frac{h'(z-i\delta)}{h(z-i\delta)}\right| & = & \left|\frac{1}{\pi i}\int_{-\infty}^{\infty}\frac{\log\left|h(t-i\delta)\right|}{(t-z)^{2}}dt+\sum_{k}\frac{2i\mathrm{Im}(u_{k})}{(z-u_{k})(z-\overline{u_{k}})}+A\right|\\
 & \leq & (2/\delta)^{2}\left(|\mathrm{Re}(z)|^{2}+1\right)\left(\frac{1}{\pi}\int_{-\infty}^{\infty}\frac{\log\left|h(t-i\delta)\right|}{t^{2}+1}dt+\sum_{k}\frac{2\mathrm{Im}(u_{k})}{|u_{k}|^{2}+1}+|A|\right).\end{eqnarray*}

\noindent Since $PW_{b}^{\infty}\subset CW_{b}$, condition (\ref{LogInt})
shows that the integral in the first term is finite. The Cartwright-Levinson
theorem implies that the sum in the second term also converges, which
finishes the proof.
\end{proof}
\noindent It is possible to obtain sharper results than this, but
this lemma is good enough for our purposes.\\

\begin{proof}[Proof of Theorem \ref{ThmAlgo}]
\noindent We proceed by establishing several intermediate bounds
and then combine them all at the end. To simplify the notation, we
will always use $z\in\mathbb{R}$ and $k\in\mathbb{Z}$ to denote
function arguments within norms, e.g. the norm of $F\in L^{\infty}(\mathbb{R})$
will be denoted by $\left\Vert F(z)\right\Vert $.\\

\noindent Let $g$ and $\eta$ be defined as in the proof of Theorem
\ref{ThmMain}. Define $I_{r,M}=\big[-\left(\lfloor r(M-1)\rfloor-1\right),\lfloor r(M-1)\rfloor-1\big]$,
$i_{r,M}=\mathbb{Z}\bigcap I_{r,M}$ and $j_{r,M}=\mathbb{Z}\bigcap\left[-M\lfloor rM\rfloor,M\lfloor rM\rfloor\right]$.
Theorem \ref{ThmConvFact1} shows that\[
E_{1}:=\left\Vert g_{M}(\tfrac{k}{Ms})-g(\tfrac{k}{Ms})\right\Vert _{l^{\infty}(j_{3/4,M})}\leq\frac{C_{1}}{\sqrt{M}}e^{-\frac{\pi}{8}\left(1-\frac{2b}{s}\right)M+2\pi c}\left\Vert f\right\Vert _{L^{\infty}(\mathbb{R})}^{2}.\]

\noindent The error term $E$ in Theorem \ref{ThmConvFact1} is an
entire function and it satisfies the classical Cauchy estimate\[
\left\Vert E'(z+ic)\right\Vert _{L^{\infty}(I_{3/4,M+2})}\leq2\pi\max\left(\left|E(z+ic)\right|,\left\{ z:|\mathrm{Re}(z)|\leq\frac{3}{4}(M+1),|\mathrm{Im}(z)|\leq\frac{1}{2\pi}\right\} \right).\]

\noindent This shows that\[
E_{2}:=\left\Vert g_{M}^{\prime}(\tfrac{k}{Ms})-g^{\prime}(\tfrac{k}{Ms})\right\Vert _{l^{\infty}(j_{3/4,M})}\leq\frac{2\pi C_{1}}{\sqrt{M}}e^{-\frac{\pi}{8}\left(1-\frac{2b}{s}\right)(M-1)+2\pi c+1}\left\Vert f\right\Vert _{L^{\infty}(\mathbb{R})}^{2}.\]
\\

\noindent A standard identity for Jacobi theta functions (\cite{WW27},
p. 475) gives the bound\[
E_{3}:=\left\Vert \sum_{l=-M}^{M}G(z-l-ic,M)\right\Vert _{L^{\infty}(I_{1/4,M})}\leq\left\Vert \sum_{l=-\infty}^{\infty}e^{\frac{\pi}{2M}(c^{2}-(z-l)^{2})}\right\Vert _{L^{\infty}(\mathbb{R})}\leq\frac{3}{2}e^{\frac{\pi c^{2}}{2M}}\sqrt{2M},\]

\noindent and similarly,\[
E_{4}:=\left\Vert \sum_{l=(k-2)M}^{(k+1)M}|G^{*}(Mk-l,M)|\right\Vert _{l^{\infty}(i_{3/4,M+2})}\leq\frac{3}{2}\sqrt{2M}.\]
\\

\noindent Now by Lemma \ref{hDh}, $\left|\frac{g'(z)}{g(z)}\right|\leq K(|z|^{2}+1)$
in $T_{\delta/2}$ for a constant $K$. Since $\left\Vert g_{M}-g\right\Vert _{L^{\infty}(I_{3/4,M+2})}\to0$
and $g$ has no zeros on $\mathbb{R}$, we have $\left\Vert 1/g_{M}\right\Vert _{L^{\infty}(I_{3/4,M+2})}<\infty$
for sufficiently large $M$. We also have the bound\begin{eqnarray*}
\left\Vert \dfrac{g_{M}'(\frac{k}{Ms})}{g_{M}(\frac{k}{Ms})}-\dfrac{g'(\frac{k}{Ms})}{g(\frac{k}{Ms})}\right\Vert _{l^{\infty}(j_{3/4,M})} & \leq & \left\Vert \frac{1}{g_{M}}\right\Vert _{L^{\infty}(I_{3/4,M+2})}\left(E_{2}+\left\Vert g'\right\Vert _{L^{\infty}(I_{3/4,M+2})}\left\Vert 1-\dfrac{g_{M}}{g}\right\Vert _{L^{\infty}(I_{3/4,M+2})}\right)\\
 & \leq & \left\Vert \frac{1}{g_{M}}\right\Vert _{L^{\infty}(I_{3/4,M+2})}\left(K(M^{2}+1)+1\right)\max(E_{1},E_{2}).\end{eqnarray*}
We can use Theorem \ref{ThmConvFact2} with this to find that\begin{eqnarray*}
E_{5} & := & \left\Vert R_{M}(k)-\int_{0}^{k}\mathrm{Im}\left(\frac{g'(\frac{t}{s})}{g(\frac{t}{s})}\right)dt\right\Vert _{l^{\infty}(i_{3/4,M})}\\
 & \leq & M\left\Vert Q_{M}(k)-\int_{k-1}^{k}\mathrm{Im}\left(\frac{g'(\frac{t}{s})}{g(\frac{t}{s})}\right)dt\right\Vert _{l^{\infty}(i_{3/4,M}\backslash\{1-\lfloor\frac{3}{4}(M-1)\rfloor\})}\\
 & \leq & M\left(\left\Vert \dfrac{g_{M}'(\frac{k}{Ms})}{g_{M}(\frac{k}{Ms})}-\dfrac{g'(\frac{k}{Ms})}{g(\frac{k}{Ms})}\right\Vert _{l^{\infty}(j_{3/4,M})}E_{4}+C_{2}(2M/\delta)^{1/2}e^{-\frac{\pi\delta M}{8}}\left\Vert \dfrac{g'(\frac{k}{Ms})}{g(\frac{k}{Ms})}\right\Vert _{l^{\infty}(j_{3/4,M})}\right)\\
 & \leq & \left\Vert \frac{1}{g_{M}}\right\Vert _{L^{\infty}(I_{3/4,M+2})}\left(K(M^{2}+1)+1\right)M^{3/2}E_{2}+C_{2}KM^{3/2}(2/\delta)^{-1/2}e^{-\frac{\pi\delta M}{8}}.\end{eqnarray*}
We now put everything together. Using the elementary inequality $||u|^{1/2}e^{i\theta}-|v|^{1/2}|\leq|u-v|^{1/2}+|1-e^{i\theta}||v|^{1/2}$
along with (\ref{fSamp}) gives {\allowdisplaybreaks

\noindent \begin{eqnarray*}
E_{6} & := & \left\Vert f_{M}\left(\frac{z}{s}\right)-\eta f\left(\frac{z}{s}\right)\right\Vert _{L^{\infty}(I_{1/4,M})}\\
 & \leq & \left\Vert f_{M}\left(\frac{k}{s}+ic\right)-\eta f\left(\frac{k}{s}+ic\right)\right\Vert _{l^{\infty}(i_{3/4,M})}E_{3}+C_{1}e^{-\frac{\pi}{8}\left(1-\frac{b}{s}\right)(M-1)+2\pi c}\left\Vert f\right\Vert _{L^{\infty}(\mathbb{R})}\\
 &  & +\left\Vert \sum_{l\in i_{1,M}\backslash i_{3/4,M}}e^{-\frac{\pi}{2(M-1)}\left(z-l\right)^{2}+\pi c}\right\Vert _{L^{\infty}(\mathbb{R})}\left\Vert f_{M}\left(\frac{k}{s}+ic\right)\right\Vert _{l^{\infty}(i_{1,M}\backslash i_{3/4,M})}\\
 & \leq & \left\Vert \left|g_{M}\left(\frac{k}{s}\right)\right|^{1/2}e^{\frac{i}{2}(R_{M}(k)+\arg g_{M}(0))}-\eta f\left(\frac{k}{s}+ic\right)\right\Vert _{l^{\infty}(i_{3/4,M})}E_{3}\\
 &  & +2C_{1}e^{-\frac{\pi}{8}\left(1-\frac{b}{s}\right)(M-1)+4\pi c}\left\Vert f\right\Vert _{L^{\infty}(\mathbb{R})}\\
 & \leq & \Bigg(E_{1}^{1/2}+\left\Vert 1-\exp\left(R_{M}(k)-\int_{0}^{k}\mathrm{Im}\left(\frac{g'(t)}{g(t)}\right)dt+\arg g_{M}(0)-\arg g(0)\right)\right\Vert _{l^{\infty}(i_{3/4,M})}\\
 &  & \cdot\left\Vert f\right\Vert _{L^{\infty}(\mathbb{R})}E_{3}\Bigg)+2C_{1}e^{-\frac{\pi}{8}\left(1-\frac{b}{s}\right)(M-1)+4\pi c}\left\Vert f\right\Vert _{L^{\infty}(\mathbb{R})}\\
 & \leq & \left(E_{1}^{1/2}+\frac{1}{2}(E_{5}+E_{1})\left\Vert f\right\Vert _{L^{\infty}(\mathbb{R})}\right)E_{3}+2C_{1}e^{-\frac{\pi}{8}\left(1-\frac{b}{s}\right)(M-1)+4\pi c}\left\Vert f\right\Vert _{L^{\infty}(\mathbb{R})}\\
 & = & C_{3}(f)M^{7/2}\exp\left(-\min\left(\frac{\pi}{16}\left(1-\frac{2b}{s}\right),\frac{\pi\delta}{8}\right)M+\max\left(\frac{\pi c^{2}}{2M},4\pi c\right)\right)\end{eqnarray*}

\noindent } for some $C_{3}(f)<\infty$ and sufficiently large $M$,
which establishes the result.
\end{proof}

\section{Numerical Experiments\label{SecNum}}

We illustrate how Algorithm \ref{Algo} works on two test cases. We
consider the translated Bessel function (see \cite{WW27}) $f(z)=J_{1}(z+20)$
sampled at $z=k$, $-M\leq k\leq M$, and a collection of $2M+1$
samples taken from an 8-bit, 44khz audio file. To show the effect
of the parameter $c$, we take $c=0.1$ in the first example and $c=0.04$
in the second one. Values of $G^{*}$ are tabulated using the built-in
Gauss-Kronrod algorithm in MATLAB. We measure the worst-case error
over $I_{1/2,M+1}$ as defined in Section \ref{SecAlgo}, in order
to avoid influence from inaccuracies close to the boundary of the
full domain $I_{1,M+1}$.

\begin{figure}[H]
\begin{raggedright}
$\,$
\par\end{raggedright}

\begin{raggedright}
$\,$
\par\end{raggedright}

\begin{raggedright}
$\,$
\par\end{raggedright}

\begin{raggedright}
\begin{tabular}{|c|c|}
\hline 
$M$ & Error over $I_{1/2,M+1}$\tabularnewline
\hline 
10 & $3.7490\cdot10^{-2}$\tabularnewline
\hline 
20 & $5.9513\cdot10^{-4}$\tabularnewline
\hline 
30 & $4.0158\cdot10^{-5}$\tabularnewline
\hline 
40 & $3.8732\cdot10^{-6}$\tabularnewline
\hline 
50 & $3.8362\cdot10^{-7}$\tabularnewline
\hline
\end{tabular}\includegraphics[bb=0bp 400bp 0bp 200bp,scale=0.5]{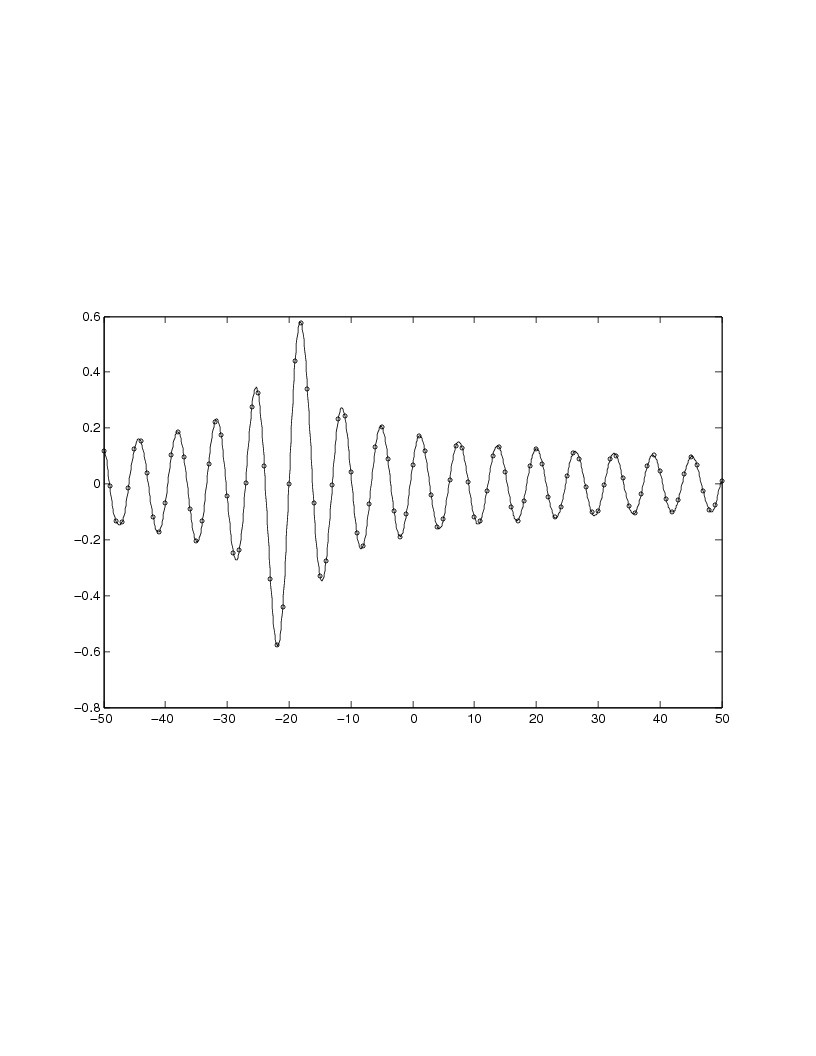}
\par\end{raggedright}

\begin{raggedright}
$\,$
\par\end{raggedright}

\begin{raggedright}
$\,$
\par\end{raggedright}

$\,$

\caption{The graph of $f(z)=J_{1}(z+20)$ and the reconstruction errors with
$c=0.1$. It is clear that the error decays exponentially as $M$
increases.}

\end{figure}

\begin{figure}[H]
$\,$

\begin{raggedright}
$\,$
\par\end{raggedright}

\begin{tabular}{|c|c|}
\hline 
$M$ & Error over $I_{1/2,M+1}$\tabularnewline
\hline 
10 & $1.9752\cdot10^{-2}$\tabularnewline
\hline 
20 & $5.1622\cdot10^{-3}$\tabularnewline
\hline 
30 & $1.5710\cdot10^{-4}$\tabularnewline
\hline 
40 & $1.1563\cdot10^{-4}$\tabularnewline
\hline 
50 & $8.8637\cdot10^{-5}$\tabularnewline
\hline
\end{tabular}\includegraphics[bb=0bp 400bp 0bp 200bp,scale=0.5]{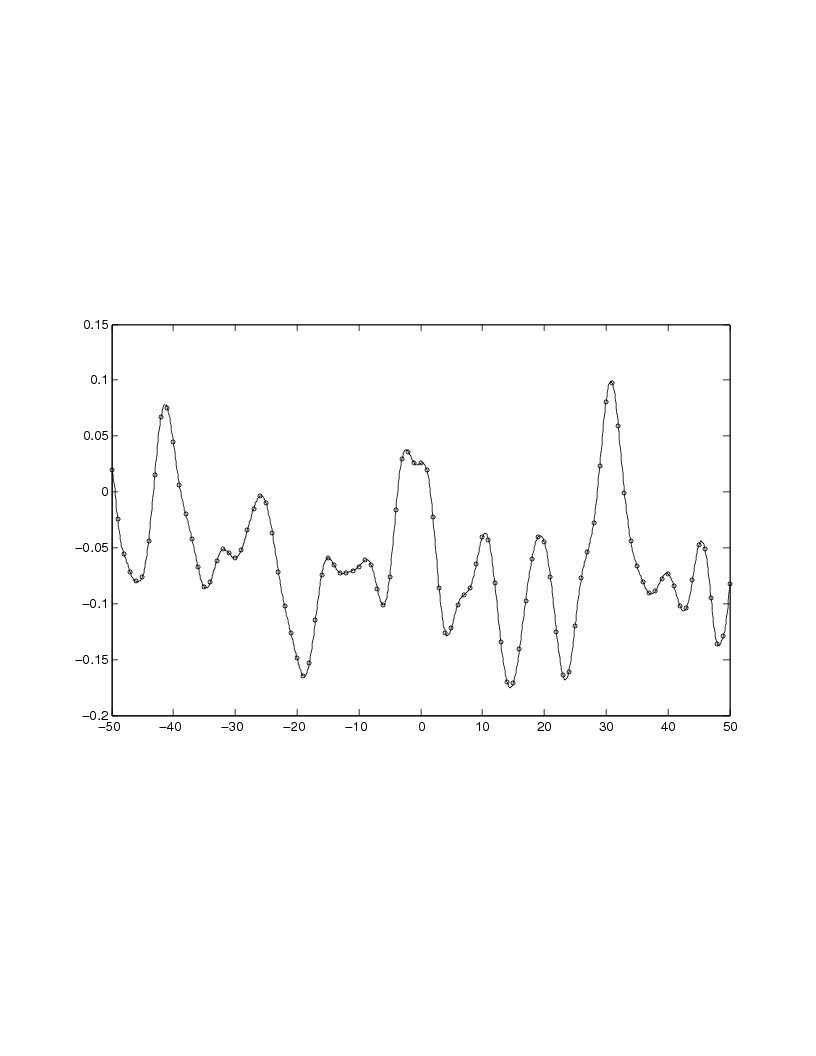}

$\,$

\begin{raggedright}
$\,$
\par\end{raggedright}

$\,$\caption{The graph of a section of audio data and the resulting reconstruction
errors with $c=0.04$. The convergence here is slower than it was
in the preceding example. This is likely due to the presence of complex
zeros with imaginary parts very close to $c$, as well as the proximity
of the real zeros.}

\end{figure}

\begin{acknowledgement*}
The author would like to thank Professor Ingrid Daubechies for many
valuable discussions in the course of this work.
\end{acknowledgement*}
\bibliographystyle{amsplain}
\bibliography{10028}

\end{document}